\newtheorem{theorem}{Theorem}
\newtheorem{corollary}[theorem]{Corollary}
\newtheorem{lem}{Lemma}
\newtheorem{defn}{Definition}
\newtheorem{ex}{Example}
\newtheorem{remark}{Remark}[section]
\title{Weak and strong convergence theorems for enriched strictly pseudocontractive operators in Hilbert spaces}
\author{Vasile BERINDE}
\begin{document}
\maketitle \pagestyle{myheadings} \markboth{Vasile Berinde} {Enriched strictly pseudocontractive operators...}
\begin{abstract}
In this paper, we introduce and study the class of {\it enriched strictly pseudocontractive mappings}  in Hilbert spaces and  extend the corresponding convergence theorem (Theorem 12) in [Browder, F. E., Petryshyn, W. V., {\it Construction of fixed points of nonlinear mappings in Hilbert space}, J. Math. Anal. Appl. {\bf 20} (1967), 197--228] and Theorem 3.1 in [Marino, G., Xu, H.-K., {\it Weak and strong convergence theorems for strict pseudo-contractions in Hilbert spaces}, J. Math. Anal. Appl. {\bf 329} (2007), no. 1, 336--346], from the class of strictly pseudocontractive mappings to that of enriched strictly pseudocontractive mappings and thus include many other important related results from literature as particular cases.
\end{abstract}

\section{Introduction}

Let $C$ be a nonempty subset of a normed space $X$ and $T:C\rightarrow C$ a mapping. 

\indent (i) $T$ is called {\it nonexpansive} if
$$
\|Tx-Ty\|\leq \|x-y\|, \forall x,y\in C.
$$

(ii)  $T$ is called {\it strictly pseudocontractive} if there exists $k<1$ such that
$$
\|Tx-Ty\|^2\leq \|x-y\|^2+k\|x-y-(Tx-Ty)\|,\forall x,y \in C.
$$
$T$ is also called $k${\it -strictly pseudocontractive}. 

If $k=1$ in the previous inequality, $T$ is called {\it pseudocontractive}.

It is easy to see that every nonexpansive mapping is strictly pseudocontractive and every strictly pseudocontractive mapping is pseudocontractive but the reverses are not more true. Moreover, like in the case of nonexpansive mappings, any strictly pseudocontractive operator is Lipschitz continuous (\cite{Os}), i.e.,
$$
\|Tx-Ty\|\leq L \|x-y\|,\forall x,y \in C,\, (L>0).
$$ 
An element $x\in C$ is said to be a {\it fixed point} of $T$ is $Tx=x$. Denote by $Fix\,(T)$ the set of all fixed points of $T$ and recall the following concepts.

The notion of strictly pseudocontractive operator in Hilbert spaces has been introduced and studied by Browder and Petryshyn \cite{BroP67}, where the following result has been established.
\begin{theorem}  \label{th1}
Let $C$ be a bounded closed convex
subset of a Hilbert space $H$ and  $T:C\rightarrow C$ be a $k$-strictly pseudocontractive  mapping. Then, for any given $x_0\in C$ and any fixed number $\gamma$ such that $1-k<\gamma<1$, the sequence $\{x_n\}_{n=0}^{\infty}$ given by
\begin{equation} \label{eq4}
x_{n+1}=(1-\gamma) x_n+\gamma Tx_n,\,n\geq 0,
\end{equation}
converges weakly to some fixed point of $T$. If, in addition, $T$ is demicompact, then $\{x_n\}_{n=0}^{\infty}$ converges strongly to some fixed point of $T$.
\end{theorem}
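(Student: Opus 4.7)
My plan is to recast the iteration as Picard iteration of the averaged operator $T_\gamma := (1-\gamma) I + \gamma T$, whose fixed-point set coincides with $\mathrm{Fix}(T)$ and for which \eqref{eq4} reads $x_{n+1} = T_\gamma x_n$. Using the Hilbert-space identity
$$
\|(1-\gamma) u + \gamma v\|^2 = (1-\gamma)\|u\|^2 + \gamma \|v\|^2 - \gamma(1-\gamma)\|u - v\|^2
$$
with $u = x - y$, $v = Tx - Ty$, and invoking the defining inequality for $k$-strict pseudocontractivity, I obtain the master estimate
$$
\|T_\gamma x - T_\gamma y\|^2 \leq \|x - y\|^2 - \gamma(1 - k - \gamma)\,\|(I - T)x - (I - T)y\|^2.
$$
For $\gamma$ in the admissible range, the coefficient has the correct sign, so $T_\gamma$ is nonexpansive, and Browder's fixed-point theorem for nonexpansive self-maps of bounded closed convex sets in Hilbert space guarantees $\mathrm{Fix}(T) = \mathrm{Fix}(T_\gamma) \neq \emptyset$.

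Choosing $y = p \in \mathrm{Fix}(T)$ in the master estimate produces the Fejér-type inequality
$$
\|x_{n+1} - p\|^2 \leq \|x_n - p\|^2 - \gamma(1 - k - \gamma)\|x_n - Tx_n\|^2.
$$
Hence $\{\|x_n - p\|\}$ is nonincreasing, $\{x_n\}$ is bounded, and telescoping gives $\sum_{n \ge 0} \|x_n - Tx_n\|^2 < \infty$, which in particular yields the asymptotic regularity $\|x_n - Tx_n\| \to 0$.

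The weak convergence conclusion now follows from the standard Fejér-monotone framework, with the key technical input being \emph{demiclosedness of $I - T$ at zero} for strictly pseudocontractive mappings: if $x_{n_j} \rightharpoonup x^*$ and $(I-T)x_{n_j} \to 0$, then $Tx^* = x^*$. I would prove this by expanding $\|Tx_{n_j} - Tx^*\|^2$ via the defining inequality of strict pseudocontractivity, passing to the weak limit, and exploiting the weak lower semicontinuity of the norm together with Opial's condition in Hilbert spaces. Once demiclosedness is available, Opial's lemma applied to the Fejér monotone $\{x_n\}$ forces uniqueness of the weak subsequential limit, so the whole sequence converges weakly to some $p^* \in \mathrm{Fix}(T)$.

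For the strong convergence under demicompactness, the definition of demicompactness says that any bounded sequence with $\|x_n - Tx_n\| \to 0$ admits a strongly convergent subsequence $x_{n_k} \to x^*$. Since $T$ is Lipschitz (as noted in the introduction), continuity yields $x^* = Tx^*$, and Fejér monotonicity with respect to $x^*$ promotes this subsequential convergence into strong convergence of the full sequence. The main obstacle in the whole plan is the demiclosedness step, which is precisely the place where the algebraic structure of strict pseudocontraction is needed beyond the mere nonexpansiveness of $T_\gamma$; the rest is the standard Fejér/Opial machinery familiar from the nonexpansive case.
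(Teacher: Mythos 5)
Your argument is correct and arrives at the same structure as the paper's treatment, but it is organized differently in two respects worth noting. The paper does not prove this statement directly: it recovers it as the special case $b=0$ (i.e.\ $\lambda=1$) of the enriched strong and weak convergence theorems of Section~2 (see Corollary~\ref{cor1} and Theorem~\ref{th4}). There, nonexpansiveness of the averaged map is obtained by first showing that $U=I-T_\lambda$ satisfies $\langle Ux-Uy,\,x-y\rangle\ge\frac{1-k}{2}\|Ux-Uy\|^2$ and then expanding $\|U_tx-U_ty\|^2$; your single application of the convexity identity for the norm is the same computation compressed into one step. The more substantive difference is the source of asymptotic regularity: the paper invokes Lemma~\ref{lem1} (the soft Browder--Petryshyn fact that an averaged nonexpansive self-map with fixed points is asymptotically regular), whereas you extract it quantitatively by telescoping the Fej\'er inequality, obtaining $\sum_n\|x_n-Tx_n\|^2<\infty$ with the explicit constant $\gamma(1-k-\gamma)$; this is self-contained, gives more information, and is in fact the device the paper itself deploys later in the proof of Theorem~\ref{th5}. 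For the weak part you go through demiclosedness of $I-T$ at zero plus Opial's lemma, while the paper's Theorem~\ref{th4} runs a direct inner-product computation with $V_\mu$, which is demiclosedness of $I-V_\mu$ in disguise; these are equivalent, and once $T_\gamma$ is known to be nonexpansive you could even shortcut your demiclosedness step by citing the demiclosedness principle for nonexpansive maps, since $I-T_\gamma=\gamma(I-T)$. One caveat: your master estimate gives nonexpansiveness only when $\gamma(1-k-\gamma)\ge 0$, i.e.\ for $0<\gamma\le 1-k$, so your proof covers the range $0<\gamma<1-k$ and not the range $1-k<\gamma<1$ printed in the statement; the latter is evidently a misprint (the paper's own Corollary~\ref{cor1}, restating the same Browder--Petryshyn theorem, uses $0<\gamma<1-k$), but you should flag explicitly that you are proving the corrected statement.
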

For some early developments emerging from Browder and Petryshyn paper \cite{BroP67}, we refer to \cite{Bet},  \cite{Deng}, \cite{Goh}, \cite{Joh}, \cite{Kha}, \cite{Mar}, \cite{Mol}, \cite{Muk}, \cite{Rho}, \cite{Os}, \cite{Schu91}, \cite{Schu91a}, \cite{Weng}, \cite{Zhou} etc.

On the other hand, the author in \cite{Ber19a}, generalized the notion of nonexpansive mapping by introducing the concept of $b$-enriched nonexpansive mappings, as follows. 

Let $(X,\|\cdot\|)$ be a linear normed space. A mapping $T:X\rightarrow X$ is said to be  {\it enriched nonexpansive} if there exists $b\in[0,ü\infty)$ such that
\begin{equation} \label{eq3a}
\|b(x-y)+Tx-Ty\|\leq (b+1) \|x-y\|,\forall x,y \in X.
\end{equation}
Note that nonexpansive mappings correspond to the case $b=0$ in \eqref{eq3a}.

In \cite{Ber19a} it has been shown that, under appropriate circumstances, any $b$-enriched nonexpansive mapping has at least one fixed point and then it was presented an iterative algorithm for the computation of the fixed points of $b$-enriched nonexpansive mappings for which there were presented weak and strong convergence theorems.

Starting from these facts, the aim of this paper is twofold: first, to introduce and study the class of enriched strictly pseudocontractive mappings, which naturally include the  enriched nonexpansive mappings as well as the strictly pseudocontractive mappings in the sense of Browder and Petryshyn \cite{BroP67}, and, secondly, to present a constructive method for approximating the fixed points of $b$-enriched strictly pseudocontractive mappings in Hilbert spaces.

We shall need in the proof of our main results the following two definitions.
\begin{defn} \cite{Pet} \label{def1}
Let $H$ be a Hilbert space and $C$ a
subset of $H$. A mapping $T:C\rightarrow H$ is called \textit{demicompact} 
if it has the property that whenever $\{u_{n}\}$ is a bounded sequence in $H$
and $\{Tu_{n}-u_{n}\}$ is strongly convergent, then there exists a
subsequence $\{u_{n_{k}}\}$ of $\{u_{n}\}$ which is strongly convergent.
\end{defn}

\begin{defn} \cite{BroP67} \label{def2}
Let $H$ be a Hilbert space and $C$ a closed convex
subset of $H$. A mapping $T:C\rightarrow C$ is called \textit{asymptotically regular} (on $C$)
if, for each $x\in C$, 
$$
\|T^{n+1}x-T^n x\|\rightarrow 0  \textnormal{ as } n\rightarrow \infty.
$$ 
\end{defn}

The following Lemma, which is adapted after Corollary to Theorem 5 in \cite{BroP67} will be also used in the proof of the main result of the next section.
\begin{lem} \label{lem1}
Let $H$ be a Hilbert space and $C$ a closed convex
subset of $H$. If the mapping $U:C\rightarrow C$ is nonexpansive and $Fix\,(U)\neq \emptyset$ then, for any given $\lambda\in (0,1)$, the mapping $U_\lambda=I+(1-\lambda) U$ maps $C$ into $C$, has the same fixed points as $U$ and is asymptotically regular.
\end{lem}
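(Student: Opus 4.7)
The statement appears to contain a minor typo: as written, $U_{\lambda} = I + (1-\lambda)U$ need not map $C$ into $C$, and its fixed-point set is generally empty. The intended definition, consistent with the Browder--Petryshyn corollary cited and with the iteration \eqref{eq4}, is the averaged mapping $U_{\lambda} = \lambda I + (1-\lambda)U$. With that reading, the proof splits into three independent verifications, which I would carry out in the order they appear in the statement.

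The first two properties are immediate. Since $C$ is convex and both $x$ and $Ux$ lie in $C$, the convex combination $U_{\lambda}x = \lambda x + (1-\lambda)Ux$ also lies in $C$, so $U_{\lambda}:C\to C$. For the fixed-point sets, $U_{\lambda}x = x$ rewrites as $(1-\lambda)(Ux - x) = 0$, and since $1-\lambda \neq 0$ this is equivalent to $Ux = x$; hence $Fix(U_{\lambda}) = Fix(U)$.

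The asymptotic regularity is the real content and the place where the Hilbert space structure is used. The plan is to invoke the identity
\[
\|\lambda a + (1-\lambda)b\|^{2} = \lambda\|a\|^{2} + (1-\lambda)\|b\|^{2} - \lambda(1-\lambda)\|a-b\|^{2},
\]
valid in any inner product space, with $a = x-p$ and $b = Ux - p$, where $p \in Fix(U)$ is chosen (using the hypothesis $Fix(U) \neq \emptyset$). Combined with nonexpansiveness $\|Ux-p\| \leq \|x-p\|$, this yields the key estimate
\[
\|U_{\lambda}x - p\|^{2} \;\leq\; \|x-p\|^{2} - \lambda(1-\lambda)\|Ux - x\|^{2}.
\]
Setting $x_{n} = U_{\lambda}^{n}x$ and iterating, the terms telescope and give
\[
\lambda(1-\lambda)\sum_{n=0}^{\infty}\|Ux_{n} - x_{n}\|^{2} \;\leq\; \|x_{0} - p\|^{2} < \infty,
\]
so $\|Ux_{n} - x_{n}\| \to 0$. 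Since $U_{\lambda}^{n+1}x - U_{\lambda}^{n}x = (1-\lambda)(Ux_{n} - x_{n})$, this gives $\|U_{\lambda}^{n+1}x - U_{\lambda}^{n}x\| \to 0$, which is precisely asymptotic regularity in the sense of Definition \ref{def2}.

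I do not anticipate a genuine obstacle; the only delicate point is recognizing that the Hilbert space identity above is what converts nonexpansiveness of $U$ into a summable ``displacement'' for the averaged iterates, which is the standard Krasnoselskii--Mann mechanism. The hypothesis $Fix(U) \neq \emptyset$ is used exactly once, to anchor the monotone decrease of $\|x_{n} - p\|^{2}$ and thereby bound the series.
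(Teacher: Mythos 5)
Your proof is correct, including the observation that $U_\lambda=I+(1-\lambda)U$ must be read as the averaged map $\lambda I+(1-\lambda)U$ for the statement to make sense. The paper itself supplies no proof of this lemma --- it is simply quoted as an adaptation of the Corollary to Theorem 5 in Browder and Petryshyn --- and your argument (the convexity identity $\|\lambda a+(1-\lambda)b\|^{2}=\lambda\|a\|^{2}+(1-\lambda)\|b\|^{2}-\lambda(1-\lambda)\|a-b\|^{2}$ applied at a fixed point $p$, yielding a telescoping sum and hence $\|Ux_{n}-x_{n}\|\to 0$) is exactly the standard mechanism behind that cited result, so there is nothing to add.
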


\section{Approximating fixed points of enriched strictly pseudocontractive mappings by Krasnoselskij iteration} 

\begin{defn}\label{def0}
Let $(X,\|\cdot\|)$ be a linear normed space. A mapping $T:X\rightarrow X$ is said to be an {\it enriched strictly pseudocontractive mapping} if there exist $b\in[0,ü\infty)$ and $k<1$ such that for all $x,y \in X$,
\begin{equation} \label{eq3}
\|b(x-y)+Tx-Ty\|^2\leq (b+1)^2 \|x-y\|^2+k\|x-y-(Tx-Ty)\|^2.
\end{equation}
We shall also call $T$ as a  $(b,k)$-{\it enriched strictly pseudocontractive mapping}. 
\end{defn}

\begin{ex}[ ] \label{ex1}
1) Any strictly pseudocontractive mapping $T$ is a $(0,k)$-enriched strictly pseudocontractive mapping, i.e., it satisfies \eqref{eq3} with $b=0$.  

2) Any  $b$-enriched nonexpansive mapping is a $(b,k)$-enriched strictly pseudocontractive mapping for any $k<1$. 
\end{ex}

Now we can state and prove the main result of this section. 
\begin{theorem}  \label{th1}
Let $C$ be a bounded closed convex
subset of a Hilbert space $H$ and  $T:C\rightarrow C$ be a $(b,k)$-enriched strictly pseudocontractive and demicompact mapping. Then  $Fix\,(T)\neq \emptyset$ and, for any given $x_0\in C$ and any fixed number $\gamma$, such that $0<\gamma<1-k$, the Krasnoselskij iteration $\{x_n\}_{n=0}^{\infty}$ given by
\begin{equation} \label{eq4}
x_{n+1}=(1-\gamma) x_n+\gamma Tx_n,\,n\geq 0,
\end{equation}
converges strongly  to a fixed point of $T$. 
\end{theorem}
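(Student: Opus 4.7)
The idea is to reduce the enriched condition to the classical one by passing to a carefully chosen averaged operator. I would introduce
\[
T_\lambda := (1-\lambda)I + \lambda T, \qquad \lambda := \tfrac{1}{b+1}.
\]
This particular $\lambda$ is forced by the identities
\[
T_\lambda x - T_\lambda y = \tfrac{1}{b+1}\bigl(b(x-y) + Tx - Ty\bigr), \qquad (x-y) - (T_\lambda x - T_\lambda y) = \tfrac{1}{b+1}\bigl((x-y) - (Tx-Ty)\bigr),
\]
since substituting these into the defining inequality \eqref{eq3} and dividing through by $(b+1)^2$ yields
\[
\|T_\lambda x - T_\lambda y\|^2 \leq \|x-y\|^2 + k\,\|(x-y) - (T_\lambda x - T_\lambda y)\|^2.
\]
Thus $T_\lambda$ is a classical $k$-strictly pseudocontractive self-map of $C$ in the sense of Browder--Petryshyn.

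I would then collect a few routine ancillary properties: convexity of $C$ gives $T_\lambda(C)\subseteq C$; since $\lambda>0$, the identity $T_\lambda x = x \Leftrightarrow Tx = x$ shows $Fix(T_\lambda) = Fix(T)$; and demicompactness transfers from $T$ to $T_\lambda$ via $T_\lambda u_n - u_n = \lambda(T u_n - u_n)$, so strong convergence of one difference is equivalent to strong convergence of the other. The existence of a fixed point of $T$ (equivalently of $T_\lambda$) then follows from the classical theory of strict pseudocontractions on bounded closed convex subsets of a Hilbert space.

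Next I would recast the Krasnoselskij iteration \eqref{eq4} in terms of $T_\lambda$. The relation $T = (b+1)T_\lambda - bI$ produces
\[
(1-\gamma)x_n + \gamma T x_n = \bigl(1 - \gamma(b+1)\bigr) x_n + \gamma(b+1)\, T_\lambda x_n,
\]
so \eqref{eq4} is precisely the Krasnoselskij iteration for the classical strict pseudocontraction $T_\lambda$ with step $\mu := \gamma(b+1)$.

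The final step is to invoke the classical strong convergence theorem for a demicompact $k$-strict pseudocontraction (Browder--Petryshyn's Theorem 12, or equivalently Marino--Xu's Theorem 3.1) applied to $T_\lambda$ with parameter $\mu$. This delivers strong convergence of $\{x_n\}$ to some point of $Fix(T_\lambda)=Fix(T)$, which together with the nonemptiness argument above gives the full conclusion. The one place where the argument is more than a mere transcription of the classical theory is the step-size bookkeeping: the classical theorem constrains $\mu$ to an interval determined by $k$, so one must translate the hypothesis on $\gamma$ into an admissible range for $\mu = \gamma(b+1)$ and keep track of the scaling factor $b+1$ throughout. Getting this conversion exactly right is the main technical check I foresee; everything else is driven by the identities above.
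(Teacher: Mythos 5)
Your reduction is exactly the paper's opening move: with $\lambda=\tfrac{1}{b+1}$ the paper also forms $T_\lambda=(1-\lambda)I+\lambda T$, divides \eqref{eq3} by $(b+1)^2$, and concludes that $T_\lambda$ is a classical $k$-strict pseudocontraction with $Fix\,(T_\lambda)=Fix\,(T)$ and with demicompactness inherited via $T_\lambda u_n-u_n=\lambda(Tu_n-u_n)$. Where you diverge is in the second half: you invoke Browder--Petryshyn's Theorem 12 for $T_\lambda$ as a black box, whereas the paper re-derives that theorem from scratch --- it shows $U=I-T_\lambda$ satisfies $\langle Ux-Uy,x-y\rangle\geq\tfrac{1-k}{2}\|Ux-Uy\|^2$, deduces that $U_t=(1-t)I+tT_\lambda$ is nonexpansive for $0<t<1-k$, and then runs the asymptotic-regularity-plus-demicompactness argument (Lemma \ref{lem1}) on the averaged map $V_\mu=(1-\mu)I+\mu U_t$. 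Your shortcut is legitimate and cleaner, since the classical theorem is available independently; indeed the paper's Corollary \ref{cor1} confirms the author regards that statement as known.

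The one real issue is precisely the step-size bookkeeping you flagged but did not carry out, and it does not resolve in favour of the stated range. Your conversion gives $\mu=\gamma(b+1)$, and the classical theorem for the $k$-strict pseudocontraction $T_\lambda$ requires $0<\mu<1-k$, i.e.
\[
0<\gamma<\frac{1-k}{b+1},
\]
which for $b>0$ is a strictly smaller interval than the hypothesis $0<\gamma<1-k$. So as written your argument proves the theorem only for $\gamma\in\bigl(0,\tfrac{1-k}{b+1}\bigr)$. You should not feel too bad about this: the paper's own proof ends by setting $\gamma=\lambda\mu t$ with $\lambda=\tfrac{1}{b+1}$, $\mu\in(0,1)$, $t\in(0,1-k)$, which likewise only reaches $\gamma<\tfrac{1-k}{b+1}$; the closing claim that this covers all $\gamma\in(0,1-k)$ is a gap in the paper, not something your approach could have repaired by citation. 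If you want your write-up to be airtight, state the admissible range as $\gamma\in\bigl(0,\lambda(1-k)\bigr)$ or prove the classical theorem's range is not sharp for the particular $T_\lambda$ at hand.
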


\begin{proof}
Using the fact that $T$ is $(b,k)$-enriched strictly pseudocontractive and denoting $b=\dfrac{1}{\lambda}-1$, it follows that
 $\lambda\in (0,1]$ and thus \eqref{eq3} is equivalent to
\begin{equation} \label{eq5}
\|(1-\lambda)(x-y)+\lambda Tx-\lambda Ty\|^2\leq  \|x-y\|^2+k\|\lambda x-\lambda y-\lambda(Tx-Ty)\|^2,
\end{equation}
 for all $x,y \in C$. 
 
 Denote $T_\lambda x=(1-\lambda) x+\lambda T x$, where $I$ is the identity map. Then inequality \eqref{eq5} expresses the fact that 
\begin{equation} \label{eq7-1}
\|T_\lambda x-T_\lambda y\|^2\leq  \|x-y\|^2+k\|x-y-(T_\lambda x-T_\lambda y)\|^2,\forall x,y \in C,
\end{equation}
i.e., the averaged operator $T_\lambda$ is $k$-strictly pseudocontractive. 

Next, we prove that the operator $U=I-T_\lambda$ satisfies the inequality
\begin{equation} \label{eq6}
\langle Ux-Uy,x-y\rangle \geq \frac{1-k}{2}\cdot \|Ux-Uy\|^2,\, \forall x,y \in C,
\end{equation}
where $k$ is the constant involved in \eqref{eq3}. Indeed, by \eqref{eq7-1} we have
\begin{equation} \label{eq8-1}
\|T_\lambda x-T_\lambda y\|^2\leq  \|x-y\|^2+k\|U x-U y)\|^2,\forall x,y \in C.
\end{equation}
On the other hand, 
$$
\|T_\lambda x-T_\lambda y\|^2=\|(I-U)x-(I-U)y\|^2=\|x-y-(Ux-Uy)\|^2
$$
$$
=\|x-y\|^2+\|Ux-Uy\|^2-2\cdot \langle Ux-Uy,x-y\rangle
$$
which, by \eqref{eq8-1} implies
$$
\|x-y\|^2+\|Ux-Uy\|^2-2\cdot \langle Ux-Uy,x-y\rangle\leq \|x-y\|^2+k\|U x-U y)\|^2
$$
from which we get the desired inequality \eqref{eq6}.

For $t>0$, consider  the map $U_t=(1-t) I+t T_\lambda$, where $I$ is the identity map,
which means $U_t=(1-t)I+t(I-U)=I-t\cdot U$. Hence
$$
\|U_t(x)-U_t(y)\|^2=\|x-y-t(Ux-Uy)\|^2=\|x-y\|^2+t^2\|Ux-Uy\|^2
$$
$$
-2t \langle Ux-Uy,x-y\rangle,
$$
which, by means of \eqref{eq6} yields
$$
\|U_t(x)-U_t(y)\|^2\leq \|x-y\|^2 +(t^2-(1-k)t)\|Ux-Uy\|^2.
$$
This shows that, for any $t$ satisfying $0<t<1-k$, the mapping
$$
U_t=I-t\cdot U=(1-t) I+t\cdot T_\lambda=(1-\lambda t) I+\lambda t T
$$
has the property
$$
\|U_t(x)-U_t(y)\|\leq  \|x-y\|,\,\forall x,y\in C,
$$
that is, $U_t$ is nonexpansive. 
In order to prove the last part of the theorem, consider the sequence $\{x_n\}_{n=0}^{\infty}$ given by 
$$
x_{n+1}=(1-\mu) x_n+\mu U_t x_n,\,n\geq 0,
$$
where $0<\mu <1$. It is obvious that $\{x_n\}_{n=0}^{\infty}$ lies in $C$ and hence it is bounded.
Denote $V_\mu=(1-\mu) I+\mu U_t$. Then, since $U_t$ is nonexpansive, by Lemma \ref{lem1} it follows that $V_\mu$ is asymptotically regular, i.e.,
\begin{equation} \label{eq8b}
\|x_n-V_\mu x_{n}\|\rightarrow 0\, \textnormal{ as } n\rightarrow \infty.
\end{equation}
We also have
\begin{equation} \label{eq8a}
V_\mu x-x=\mu (U_t x-x)=t \lambda \mu (Tx-x),
\end{equation}
and hence
$$
\|x_n-U_t x_{n}\|\rightarrow 0,\, \textnormal{ as } n\rightarrow \infty.
$$
Since, by hypothesis, $T$ is demicompact, it follows by \eqref{eq8a} that $V_\mu$
 is demicompact, too. Hence, in view of \eqref{eq8b}, there exists a subsequence $\{x_{n_k}\}$ of $\{x_n\}_{n=0}^{\infty}$ which converges strongly in $C$.
 Denote
 $$
 \lim_{k\rightarrow \infty} x_{n_k}=q.
 $$
 Then, by the continuity of $U_t$ it follows that $V_\mu$ is continuous and hence
 $$
 V_\mu x_{n_k} \rightarrow V_\mu q,\, \textnormal{ as } k\rightarrow \infty.
 $$
 Therefore, $\{x_{n_k} -V_\mu x_{n_k} \}$ converges strongly to $0$ and simultaneously, $\{x_{n_k} -V_\mu x_{n_k} \}$ converges strongly to $q-V_\mu q$, which proves that $q=V_\mu q$, i.e., $$q\in Fix\,(V_\mu)=Fix\,(U_t)=Fix\,(T_\lambda)=Fix\,(T).$$

The convergence  of the entire sequence $\{x_n\}_{n=0}^{\infty}$ to $q$ now follows from the inequality
$$
\|x_{n+1}-q\|\leq \|x_{n}-q\|, \,n\geq 0,
$$
which is a direct consequence of the nonexpansivity of $V_\mu$ (which, in turn, is a consequence of the nonexpansivity of $U_t$). 

Hence, for any $x_0\in C$, the Krasnoselskij iteration $\{x_n\}_{n=0}^{\infty}$, given by
$$
x_{n+1}=V_\mu x_n=(1-\mu) x_n+\mu U_t x_n=(1-\mu) x_n+\mu [(1-\lambda t)x_n+\lambda t T x_n]
$$
$$
=(1-\lambda \mu t) x_n+\lambda \mu t Tx_n
$$
converges strongly to $q\in Fix\,(T)$ as $n\rightarrow \infty$.

To get exactly the formula \eqref{eq4} for the Krasnoselskij iteration $\{x_n\}_{n=0}^{\infty}$ given above, we simply denote $\gamma=\lambda \mu t$ and use the fact that $t\in (0,1-k)$ to get the inequality $\gamma\in (0,1-k)$.
\end{proof}

\begin{remark}\label{rem1}
Theorem \ref{th1} is an extension of Theorem 12  in Browder and Petryshyn \cite{BroP67}, by considering instead of strictly pseudocontractive mappings the larger class of enriched strictly pseudocontractive  mappings.
\end{remark}
As a corollary of Theorem \ref{th1}, we present the strong convergence part of Theorem 12 in \cite{BroP67}.
\begin{corollary} (Theorem 12, \cite{BroP67}) \label{cor1}
Let $C$ be a bounded closed convex
subset of a Hilbert space $H$ and  $T:C\rightarrow C$ be a  $k$-strictly pseudocontractive and demicompact operator. Then $Fix\,(T)\neq \emptyset$  and for any given $x_0\in C$ and any fixed number $\gamma$, $0<\gamma<1-k$, the Krasnoselskij iteration $\{x_n\}_{n=0}^{\infty}$ given by
$$
x_{n+1}=(1-\gamma) x_n+\gamma Tx_n,\,n\geq 0,
$$
converges strongly to a fixed point of $T$.
\end{corollary}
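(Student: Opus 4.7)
The plan is to obtain Corollary \ref{cor1} as an immediate specialization of Theorem \ref{th1} by choosing the enrichment parameter $b=0$. The entire content of the corollary is built into Theorem \ref{th1} once one observes, as recorded in Example \ref{ex1}(1), that the class of Browder--Petryshyn $k$-strictly pseudocontractive mappings embeds into the class of $(b,k)$-enriched strictly pseudocontractive mappings by taking $b=0$. Thus the proof will consist of verifying this embedding, checking that the hypotheses of Theorem \ref{th1} are satisfied, and confirming that the iteration \eqref{eq4} in the statement literally coincides with the iteration produced by Theorem \ref{th1} in this degenerate case.

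First I would set $b=0$ in Definition \ref{def0}: the defining inequality \eqref{eq3} becomes
$$
\|Tx-Ty\|^2\leq \|x-y\|^2+k\|x-y-(Tx-Ty)\|^2,\quad \forall x,y\in C,
$$
which is exactly (a squared-norm version of) the hypothesis that $T$ be $k$-strictly pseudocontractive in the sense recalled in the introduction. Hence a $k$-strictly pseudocontractive $T:C\to C$ is automatically $(0,k)$-enriched strictly pseudocontractive.

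Next, I would apply Theorem \ref{th1} with this $(0,k)$-enriched strictly pseudocontractive mapping $T$: the set $C$ is bounded closed convex by hypothesis, and $T$ is demicompact by hypothesis, so all assumptions are met. Theorem \ref{th1} then yields $Fix\,(T)\neq\emptyset$ and, for every $x_0\in C$ and every $\gamma\in(0,1-k)$, strong convergence of the sequence defined by \eqref{eq4} to some fixed point of $T$. Tracing the proof of Theorem \ref{th1}, the substitution $b=0$ gives $\lambda = 1/(b+1)=1$, so the averaged operator $T_\lambda$ reduces to $T$ itself and the relation $\gamma=\lambda\mu t$ reduces to $\gamma=\mu t\in(0,1-k)$, matching the admissible range stated in the corollary.

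Since Corollary \ref{cor1} is obtained purely by specialization, there is essentially no technical obstacle: the only thing to double-check is that the formal inequality in Definition \ref{def0} with $b=0$ is literally the definition of $k$-strict pseudocontractivity used in Section 1, and that the parameter range $\gamma\in(0,1-k)$ transfers unchanged. Both checks are immediate.
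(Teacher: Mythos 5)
Your proposal is correct and follows essentially the same route as the paper's own proof, which likewise observes that a $k$-strictly pseudocontractive mapping is a $(0,k)$-enriched strictly pseudocontractive mapping and then invokes Theorem \ref{th1} with $b=0$, i.e., $\lambda=1$. Your additional tracing of the parameter identification $\gamma=\mu t\in(0,1-k)$ is a harmless elaboration of the same specialization argument.
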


\begin{proof}
A $k$-strictly pseudocontractive mapping is a $(0,k)$-enriched $k$-strictly pseudocontractive mapping. Hence, Corollary \ref{cor1} follows from Theorem \ref{th1} for $b=0$ , that is, for $\lambda=1$.
\end{proof}

We now prove a weak convergence theorem which is an extension of the weak convergence part of Theorem 12 in \cite{BroP67} and corresponds to the case when $T$ is not more demicompact.

\begin{theorem}  \label{th4}
Let $C$ be a bounded closed convex
subset of a Hilbert space $H$ and  $T:C\rightarrow C$ be a $(b,k)$-enriched strictly pseudocontractive mapping. Then  $Fix\,(T)\neq \emptyset$ and, for any given $x_0\in C$ and any fixed number $\gamma$, such that $0<\gamma<1-k$, the Krasnoselskij iteration $\{x_n\}_{n=0}^{\infty}$ given by
$$
x_{n+1}=(1-\gamma) x_n+\gamma Tx_n,\,n\geq 0,
$$
converges weakly  to a fixed point of $T$. 
\end{theorem}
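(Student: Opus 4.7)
The plan is to reduce the problem to the nonexpansive setting exactly as in Theorem~\ref{th1}, then replace the appeal to demicompactness by the Browder demiclosedness principle, and finally upgrade subsequential weak convergence to weak convergence of the full sequence using Opial's lemma.

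First I would repeat the reduction already carried out in Theorem~\ref{th1}: setting $\lambda=1/(b+1)\in(0,1]$, the enriched inequality \eqref{eq3} rewrites as \eqref{eq7-1}, so $T_\lambda=(1-\lambda)I+\lambda T$ is a $k$-strictly pseudocontractive self-map of $C$ and the same computation yields \eqref{eq6} for $U=I-T_\lambda$. Picking $t\in(0,1-k)$ one obtains that $U_t=(1-t)I+tT_\lambda=(1-\lambda t)I+\lambda t T$ is nonexpansive on $C$, with $Fix\,(U_t)=Fix\,(T_\lambda)=Fix\,(T)$. Choosing any $\mu\in(0,1)$ and setting $V_\mu=(1-\mu)I+\mu U_t$, Lemma~\ref{lem1} shows that $V_\mu$ is also nonexpansive with the same fixed point set and is asymptotically regular. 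Writing the Krasnoselskij iterate as $x_{n+1}=V_\mu x_n=(1-\lambda\mu t)x_n+\lambda\mu t\,Tx_n$ and denoting $\gamma=\lambda\mu t\in(0,1-k)$, we have
$$
\|x_n-V_\mu x_n\|\to 0,\qquad \|x_n-U_tx_n\|\to 0,\qquad \|x_n-Tx_n\|\to 0,
$$
and the sequence $\{x_n\}$ lies in the bounded set $C$.

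Since $C$ is a bounded, closed, convex subset of a Hilbert space, it is weakly sequentially compact, so there exists a subsequence $x_{n_k}\rightharpoonup q\in C$. Because $U_t$ is nonexpansive, the classical Browder demiclosedness principle asserts that $I-U_t$ is demiclosed at $0$; combined with $x_{n_k}\rightharpoonup q$ and $x_{n_k}-U_tx_{n_k}\to 0$ this gives $q=U_tq$, hence $q\in Fix\,(U_t)=Fix\,(T)$. In particular, $Fix\,(T)\neq\emptyset$.

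Finally, to promote this subsequential information to weak convergence of the entire sequence, I would appeal to Opial's lemma. For any $p\in Fix\,(T)=Fix\,(V_\mu)$, nonexpansivity of $V_\mu$ gives $\|x_{n+1}-p\|\leq\|x_n-p\|$, so $\{\|x_n-p\|\}$ is convergent; together with the fact that every weak cluster point of $\{x_n\}$ lies in $Fix\,(T)$ (proved in the previous paragraph applied to an arbitrary weakly convergent subsequence), Opial's condition in Hilbert space forces uniqueness of the weak cluster point, and therefore $x_n\rightharpoonup q\in Fix\,(T)$. The main obstacle in the argument is the use of the demiclosedness principle, which replaces the demicompactness assumption removed from Theorem~\ref{th1} and is the only place where Hilbert space structure (beyond Opial's inequality) is essentially used; the rest is a routine reformulation of the enriched setting into the averaged/nonexpansive one already developed in the proof of Theorem~\ref{th1}.
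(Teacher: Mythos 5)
Your proposal is correct and follows the same overall skeleton as the paper's proof --- the reduction to the nonexpansive averaged map $U_t=(1-\lambda t)I+\lambda tT$ and the use of Lemma \ref{lem1} to obtain asymptotic regularity of $V_\mu=(1-\mu)I+\mu U_t$ are identical --- but the two arguments diverge in how they identify weak cluster points as fixed points and how they pass from a subsequence to the whole sequence. The paper does not invoke the demiclosedness principle as a black box: it reproves it for the sequence at hand, expanding $\|x_{n_j}-V_\mu p_0\|^2=\|x_{n_j}-p_0\|^2+\|p_0-V_\mu p_0\|^2+2\langle x_{n_j}-p_0,p_0-V_\mu p_0\rangle$ and combining the resulting limit with the $\limsup$ estimate furnished by nonexpansivity and asymptotic regularity to force $\|p_0-V_\mu p_0\|=0$; this is exactly the Opial-type computation underlying Browder's theorem, so your citation of the demiclosedness principle buys brevity at no loss of correctness. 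Where you genuinely improve on the paper is the final step: the paper's passage to weak convergence of the entire sequence is asserted rather tersely (it writes $Fix\,(T)=\{p\}$ and ``therefore $p_0=p$'' without justifying uniqueness of the weak cluster point), whereas your combination of the Fej\'er-type monotonicity $\|x_{n+1}-p\|\leq\|x_n-p\|$ for $p\in Fix\,(T)$ with Opial's condition is the standard, complete way to finish. One caveat applies equally to both arguments: Lemma \ref{lem1} requires $Fix\,(U_t)\neq\emptyset$ before asymptotic regularity can be invoked, so the nonemptiness of the fixed point set must be secured first from the Browder--G\"ohde--Kirk theorem for nonexpansive self-maps of a bounded closed convex subset of a Hilbert space, rather than obtained (as your ``in particular'' suggests) as a by-product of the cluster-point analysis.
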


\begin{proof}
We proceed like in the first part of the proof of Theorem \ref{th1} and get the conclusion that the mapping $U_t=(1-\lambda t)I+\lambda t T$ is nonexpansive.

To prove the theorem, it suffices to show that if  $\{x_{n_j}\}_{j=0}^{\infty}$ given by 
$$
x_{n_j+1}=(1-\mu) x_{n_j}+\mu U_t x_{n_j},\,j\geq 0,
$$
converges weakly to a certain $p_0$, then $p_0$ is a fixed point of $U_t$ (and of $T_\lambda=(1-\lambda) I+\lambda T$) and hence of $T$ and therefore $p_0=p$. 

Suppose that $\{x_{n_{j}}\}_{j=0}^{\infty}$  does not converge weakly to $p$. 

Using the same arguments like in the proof of Theorem \ref{th1}, we obtain that $V_\mu=(1-\mu) I+\mu U_t$ is nonexpansive and asymptotically regular, that is,
$$
\|x_{n_j}-V_\mu x_{n_j}\|\rightarrow 0,\, \textnormal{ as } j\rightarrow \infty.
$$
On the other hand
$$
\|x_{n_j}-V_\mu p_0\|\leq \|V_\mu x_{n_j}-V_\mu p_0\|+\|x_{n_j}-V_\mu x_{n_j}\|
$$
$$
\leq \|x_{n_j}-p_0\|+\|x_{n_j}-V_\mu x_{n_j}\|,
$$
which implies that
\begin{equation} \label{eq7}
\limsup \left(\|x_{n_j}-V_\mu p_0\|-\|x_{n_j}-p_0\| \right)\leq 0.
\end{equation}
Similarly to the proof of Theorem \ref{th1} in \cite{Ber19a}, we have
$$
\|x_{n_j}-V_\mu p_0\|^2=\|(x_{n_j}-p_0)+(p_0-V_\mu p_0\|^2
$$
$$
=\|x_{n_j}-p_0\|^2+\|p_0-V_\mu p_0\|^2+2\langle x_{n_j}-p_0,p_0-V_\mu p_0\rangle,
$$
which, together with the fact that $\{x_{n_j}\}$ converges weakly to $p_0$, implies
\begin{equation} \label{eq8}
\lim_{j\rightarrow \infty} \left[\|x_{n_j}-V_\mu p_0\|^2-\|x_{n_j}-p_0\|^2\right]=\|p_0-V_\mu p_0\|^2.
\end{equation}
We also have
$$
\|x_{n_j}-V_\mu p_0\|^2-\|x_{n_j}-p_0\|^2=\left(\|x_{n_j}-V_\mu p_0\|-\|x_{n_j}-p_0\|\right)\cdot
$$
\begin{equation} \label{eq9}
\left(\|x_{n_j}-V_\mu p_0\|+\|x_{n_j}-p_0\|\right).
\end{equation}
Since $C$ is bounded, the sequence $\{\|x_{n_j}-V_\mu p_0\|+\|x_{n_j}-p_0\|\}$ is bounded, too, and therefore by combining \eqref{eq7}, \eqref{eq8} and \eqref{eq9}, we get
$$
\|p_0-V_\mu p_0\|=0,
$$
that is,
$$
V_\mu p_0=p_0,
$$
which implies
$$
 p_0\in Fix\,(V_\mu)=Fix\,(U_t)=Fix\,(T)=\{p\}.
$$
\end{proof}

\section{Approximating fixed points of enriched strictly pseudocontractive mappings by Mann iteration} 

The aim of this section is to show that, under appropriate assumptions, we can obtain convergence theorems for Mann type iterative scheme, which generates a sequence $\{x_n\}$ in the following manner:
\begin{equation} \label{3.1}
x_{n+1}=(1-\gamma_n) x_n+\gamma_n Tx_n,\,n\geq 0,
\end{equation}
where $\gamma_n$ is a real sequence in $(0,1)$ and $x_0$ is the initial guess. Note that Krasnoselskij iteration \eqref{eq4} is obtained from \eqref{3.1} when the sequence $\{\gamma_n\}$ is constant.

Marino and Xu \cite{Mar} have proved weak and strong convergence theorems, by using algorithm \eqref{3.1} in order to approximate fixed points of $k$-strictly pseudo-contractive mappings, with the control sequence $\gamma_n$ satisfying some appropriate conditions. The aim of this section is to extend Marino and Xu \cite{Mar} and other related results from $k$-strictly pseudo-contractive mappings to enriched $(b,k)$-strictly pseudo-contractive mappings.

In order to establish our convergence theorems, we collect some auxiliary results mainly taken from Marino and Xu \cite{Mar}.

\begin{lem} {\rm (Lemma 1.1, \cite{Mar})}\label{lem2}
 Let $H$ be a real Hilbert space. Then the following identities hold:
$$
(i) \quad \|x\pm y\|^2=\|x\|^2\pm 2\langle x,y\rangle+\|y\|^2,\forall x,y\in H;
$$
$$
(ii) \quad \|tx+(1-t)y\|^2=t\|x\|^2+(1-t)\|y\|^2-t(1-t)\|x-y\|^2,\forall t\in [0,1],\forall x,y\in H.
$$
(iii) If $\{x_n\}$ is a sequence in $H$ weakly convergent to $z$, then
$$
\limsup_{n\rightarrow \infty} \|x_n-y\|^2=\limsup_{n\rightarrow \infty} \|x_n-z\|^2+\|z-y\|^2,\forall y\in H.
$$
\end{lem}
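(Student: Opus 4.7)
The plan is to establish the three identities in turn by direct computation with the inner product; since every norm term satisfies $\|u\|^2=\langle u,u\rangle$, the whole argument reduces to bilinearity and symmetry of $\langle\cdot,\cdot\rangle$, plus, in (iii), the definition of weak convergence.

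For (i), I would expand $\|x\pm y\|^2=\langle x\pm y,x\pm y\rangle$ and use bilinearity to split it as $\langle x,x\rangle\pm 2\langle x,y\rangle+\langle y,y\rangle$. This is the whole content of (i) and it will be used freely in the remaining parts.

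For (ii), I would first apply (i) to the pair $(tx,(1-t)y)$ to obtain $\|tx+(1-t)y\|^2=t^2\|x\|^2+2t(1-t)\langle x,y\rangle+(1-t)^2\|y\|^2$. On the other side, $\|x-y\|^2$ expands via (i) into $\|x\|^2-2\langle x,y\rangle+\|y\|^2$, so the right-hand member of (ii) rearranges to $(t-t(1-t))\|x\|^2+((1-t)-t(1-t))\|y\|^2+2t(1-t)\langle x,y\rangle$. The elementary identities $t-t(1-t)=t^2$ and $(1-t)-t(1-t)=(1-t)^2$ then match the two expressions term by term.

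For (iii), the key step is to write $x_n-y=(x_n-z)+(z-y)$ and apply (i) to obtain
$$
\|x_n-y\|^2=\|x_n-z\|^2+2\langle x_n-z,z-y\rangle+\|z-y\|^2.
$$
Since $x_n$ converges weakly to $z$ and $z-y$ is a fixed vector of $H$, the cross term $\langle x_n-z,z-y\rangle$ tends to $0$. Taking $\limsup$ on both sides and pulling the constant $\|z-y\|^2$ out then yields (iii). No substantial obstacle arises: the three claims are textbook Hilbert-space identities, and the only step requiring any care is (iii), where weak convergence must be invoked precisely to annihilate the cross term — were $z-y$ allowed to vary with $n$, the argument would collapse.
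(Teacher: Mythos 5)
Your argument is correct and is the standard one: parts (i) and (ii) follow from bilinearity of the inner product, and in (iii) the decomposition $x_n-y=(x_n-z)+(z-y)$ together with weak convergence kills the cross term, after which the constant $\|z-y\|^2$ passes through the $\limsup$. The paper itself states this lemma without proof, citing Lemma 1.1 of Marino and Xu, and your computation reproduces exactly the proof given there.
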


\begin{lem} {\rm (Proposition 2.1, (ii),\cite{Mar})}\label{lem3}
Let $C$ be a closed convex subset of a Hilbert space $H$ and $T:C\rightarrow C$ be a $k$-strict pseudo contraction. Then $I-T$ is demiclosed (at $0$), that is, if $\{x_n\}$ is a sequence in $C$ such that $x_n\rightharpoonup \overline{x}$ and $(I-T)x_n\rightarrow 0,$ then $(I-T)\overline{x}=0$.
\end{lem}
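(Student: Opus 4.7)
The plan is to exploit the inverse-strong-monotone inequality for $U := I - T$ that the paper has already derived, in a slightly more general form, in the proof of Theorem \ref{th1}. Specialising the argument leading to \eqref{eq6} to the case $b=0$ (equivalently $\lambda=1$, so that $T_\lambda = T$) uses nothing beyond the defining inequality of a $k$-strictly pseudocontractive mapping, and yields
\begin{equation*}
\langle Ux - Uy,\, x-y\rangle \;\geq\; \frac{1-k}{2}\,\|Ux - Uy\|^2 \qquad \forall\, x,y \in C.
\end{equation*}
So my first step is simply to record this inequality for $U = I - T$, which is an almost costless consequence of the computation already done in the paper.

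Next I would apply it with $x = x_n$ and $y = \overline{x}$ and split the left-hand side bilinearly:
\begin{equation*}
\langle x_n - \overline{x},\, Ux_n\rangle \;-\; \langle x_n - \overline{x},\, U\overline{x}\rangle \;\geq\; \frac{1-k}{2}\,\|Ux_n - U\overline{x}\|^2.
\end{equation*}
Then I would pass to the limit termwise. The sequence $\{x_n\}$ is weakly convergent hence bounded, so Cauchy--Schwarz gives $|\langle x_n - \overline{x},\, Ux_n\rangle| \leq \|x_n - \overline{x}\|\cdot\|Ux_n\| \to 0$ because $Ux_n = (I-T)x_n \to 0$ strongly by hypothesis. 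The second inner product $\langle x_n - \overline{x},\, U\overline{x}\rangle$ tends to $0$ because $x_n - \overline{x} \rightharpoonup 0$ and $U\overline{x}$ is a fixed element of $H$. On the right-hand side, strong convergence $Ux_n \to 0$ gives $\|Ux_n - U\overline{x}\|^2 \to \|U\overline{x}\|^2$.

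Passing to the limit therefore produces $0 \geq \frac{1-k}{2}\,\|U\overline{x}\|^2$, and since $k < 1$ this forces $U\overline{x} = 0$, i.e., $(I-T)\overline{x} = 0$, which is exactly the demiclosedness conclusion. The only delicate point is making the cross-term $\langle x_n - \overline{x},\, Ux_n\rangle$ disappear, which requires combining boundedness of the weakly convergent sequence with the strong convergence $(I-T)x_n \to 0$; everything else is routine. An alternative route would expand $\|Tx_n - T\overline{x}\|^2$ using the strict-pseudocontraction inequality and then invoke Opial's identity (Lemma \ref{lem2}(iii)) to handle $\limsup \|x_n - T\overline{x}\|^2$, but it lands on the same final inequality $(1-k)\|(I-T)\overline{x}\|^2 \leq 0$, so the monotonicity route above seems the most economical.
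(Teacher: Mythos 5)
Your argument is correct. Note first that the paper itself gives no proof of this lemma: it is imported verbatim as Proposition 2.1(ii) of Marino--Xu, so there is no in-paper argument to compare against. Your proof is a valid, self-contained derivation. The key inequality $\langle Ux-Uy,x-y\rangle\geq\frac{1-k}{2}\|Ux-Uy\|^2$ for $U=I-T$ is exactly the paper's inequality \eqref{eq6} specialised to $\lambda=1$, and the limit passage is sound: weak convergence gives boundedness of $\{x_n-\overline{x}\}$, so the cross term $\langle x_n-\overline{x},Ux_n\rangle$ dies by Cauchy--Schwarz together with $Ux_n\to 0$; the term $\langle x_n-\overline{x},U\overline{x}\rangle$ dies by the definition of weak convergence (here one uses that $\overline{x}\in C$, which holds because a closed convex set is weakly closed, so $U\overline{x}$ is defined); and $\|Ux_n-U\overline{x}\|\to\|U\overline{x}\|$ by strong convergence. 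This yields $0\geq\frac{1-k}{2}\|U\overline{x}\|^2$ and hence $(I-T)\overline{x}=0$. For comparison, Marino and Xu's own proof runs through the Opial-type identity recorded here as Lemma \ref{lem2}(iii), expanding $\limsup\|x_n-T\overline{x}\|^2$ and using the strict-pseudocontraction inequality to reach $(1-k)\|(I-T)\overline{x}\|^2\leq 0$ --- the alternative you sketch at the end. Your monotonicity route is the more economical of the two, since it avoids the Opial identity entirely and reuses a computation the paper has already carried out.
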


\begin{theorem}  \label{th5}
Let $C$ be a bounded closed convex
subset of a Hilbert space $H$ and  $T:C\rightarrow C$ be a $(b,k)$-enriched strictly pseudocontractive mapping for some $0\leq k<1$. Then  $Fix\,(T)\neq \emptyset$ and, for any given $x_0\in C$ and any control sequence $\{\alpha_n\}$, such that $k<\alpha_n<1$ and 
\begin{equation} \label{eq7-1u}
\sum_{n=0}^{\infty} (\alpha_n-k)(1-\alpha_n) =\infty,
\end{equation}
the Krasnoselskij-Mann iteration $\{x_n\}_{n=0}^{\infty}$ given by
$$
x_{n+1}=(1-\lambda\alpha_n) x_n+\lambda \alpha_n T x_n],\,n\geq 0,
$$
for some $\lambda\in (0,1)$, converges weakly  to a fixed point of $T$. 
\end{theorem}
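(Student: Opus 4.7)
The plan is to reduce the problem to Marino--Xu's weak convergence theorem (Theorem~3.1 of \cite{Mar}) via the same averaging device used in Theorem~\ref{th1}. Setting $\lambda=1/(b+1)$, inequality \eqref{eq3} is equivalent to $T_\lambda:=(1-\lambda)I+\lambda T$ being a (classical) $k$-strictly pseudocontractive self-map of $C$, with $Fix\,(T_\lambda)=Fix\,(T)$. Moreover, since $x-T_\lambda x=\lambda(x-Tx)$, a direct computation shows
\[
x_{n+1}=(1-\lambda\alpha_n)x_n+\lambda\alpha_n Tx_n=(1-\alpha_n)x_n+\alpha_n T_\lambda x_n,
\]
so the iterate is precisely the Mann iteration for the $k$-strict pseudocontraction $T_\lambda$ with control sequence $\{\alpha_n\}$, and the task reduces to proving weak convergence of this Mann iteration.

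Next I would verify $Fix\,(T)\neq\emptyset$. Since $T_\lambda$ is $k$-strictly pseudocontractive on the bounded closed convex set $C$, the averaged map $U_t=(1-t)I+tT_\lambda$ is nonexpansive for every $t\in(0,1-k)$ (this is exactly the computation carried out in the proof of Theorem~\ref{th1}), and by the Browder--G\"ohde--Kirk fixed point theorem $U_t$ admits a fixed point in $C$, which is simultaneously a fixed point of $T_\lambda$ and hence of $T$.

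For the weak convergence itself, I would transplant the Marino--Xu argument to $T_\lambda$. For any $p\in Fix\,(T_\lambda)$, combining identity (ii) of Lemma~\ref{lem2} with the $k$-strict pseudocontractivity of $T_\lambda$ leads to a decrease estimate
\[
\|x_{n+1}-p\|^2\leq\|x_n-p\|^2-c_n\|x_n-T_\lambda x_n\|^2,
\]
where $c_n>0$ is the factor that matches (up to relabelling of the Mann convention) the product $(\alpha_n-k)(1-\alpha_n)$ appearing in \eqref{eq7-1u}. Monotonicity of $\{\|x_n-p\|\}$ is then immediate, telescoping yields $\sum_n c_n\|x_n-T_\lambda x_n\|^2<\infty$, and the divergence hypothesis \eqref{eq7-1u} combined with the Lipschitz continuity of the strict pseudo-contraction $T_\lambda$ upgrades a $\liminf$ to a $\lim$, giving $\|x_n-T_\lambda x_n\|\to 0$.

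To pass from $\|x_n - T_\lambda x_n\|\to 0$ to weak convergence of the full sequence I would argue exactly as in the proof of Theorem~\ref{th4}: the demiclosedness of $I-T_\lambda$ at $0$ (Lemma~\ref{lem3}) forces every weak subsequential limit $q$ of $\{x_n\}$ to lie in $Fix\,(T_\lambda)=Fix\,(T)$, and the Opial-type argument based on Lemma~\ref{lem2}(iii) together with the monotonicity of $\|x_n-p\|$ for every $p\in Fix\,(T)$ rules out two distinct weak subsequential limits. The main obstacle, I expect, is the bookkeeping required to show that the coefficient $c_n$ produced by the decrease estimate coincides with $(\alpha_n-k)(1-\alpha_n)$ under the stated range $k<\alpha_n<1$ when the Mann iteration is applied to $T_\lambda$ (rather than to $T$); once this normalization is settled, all remaining steps are direct adaptations of the corresponding arguments of Marino and Xu~\cite{Mar}.
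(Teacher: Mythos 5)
Your overall strategy is the same as the paper's: pass to $T_\lambda=(1-\lambda)I+\lambda T$ with $\lambda=1/(b+1)$, observe that $T_\lambda$ is a classical $k$-strict pseudocontraction with $Fix\,(T_\lambda)=Fix\,(T)$, and run the Marino--Xu argument on $T_\lambda$. However, the issue you defer as ``bookkeeping'' is in fact where your version breaks. Your identity $x_{n+1}=(1-\lambda\alpha_n)x_n+\lambda\alpha_n Tx_n=(1-\alpha_n)x_n+\alpha_n T_\lambda x_n$ is algebraically correct for the recursion as displayed in the statement, but with that convention Lemma \ref{lem2}(ii) together with the $k$-strict pseudocontractivity of $T_\lambda$ gives
$$
\|x_{n+1}-p\|^2\leq \|x_n-p\|^2-\alpha_n\,(1-k-\alpha_n)\,\|x_n-T_\lambda x_n\|^2,
$$
whose coefficient $\alpha_n(1-k-\alpha_n)$ is \emph{negative} whenever $\alpha_n>1-k$, which is perfectly compatible with the stated hypotheses $k<\alpha_n<1$ and \eqref{eq7-1u} (take $k=1/2$, $\alpha_n\equiv 3/4$). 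So the monotonicity of $\|x_n-p\|$ and the summability step both fail for the recursion as you (and the theorem statement) wrote it. This is not a relabelling of a coefficient name: the hypotheses $k<\alpha_n<1$ and \eqref{eq7-1u} match the convention in which the weight on $T_\lambda x_n$ is $1-\alpha_n$, i.e., $x_{n+1}=\alpha_n x_n+(1-\alpha_n)T_\lambda x_n$, equivalently $x_{n+1}=(1-\lambda(1-\alpha_n))x_n+\lambda(1-\alpha_n)Tx_n$; this is the recursion the paper's proof actually analyzes (see \eqref{eq7-1b}), and it yields exactly the coefficient $(\alpha_n-k)(1-\alpha_n)$. You correctly sensed the mismatch, but resolving it requires changing the iteration, not renaming a constant.

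A second genuine gap is your claim that the Lipschitz continuity of $T_\lambda$ ``upgrades a $\liminf$ to a $\lim$'' for $\|x_n-T_\lambda x_n\|$. Lipschitz continuity only gives $\|x_{n+1}-T_\lambda x_{n+1}\|\leq C\|x_n-T_\lambda x_n\|$ with some $C>1$, which does not convert $\liminf=0$ into $\lim=0$. The paper devotes the longest computation of the proof to precisely this point: using Lemma \ref{lem2} and the strict pseudocontractivity of $T_\lambda$ it derives the quadratic inequality $(\delta_n-1)[(1-k)\delta_n+1+k-2\alpha_n]\leq 0$ for $\delta_n=\|x_{n+1}-T_\lambda x_{n+1}\|/\|x_n-T_\lambda x_n\|$, and the condition $\alpha_n>k$ forces $\delta_n\leq 1$, i.e., $\{\|x_n-T_\lambda x_n\|\}$ is nonincreasing, whence the limit exists and equals the $\liminf$. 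Your final step (demiclosedness of $I-T_\lambda$ via Lemma \ref{lem3} plus the Opial-type uniqueness argument via Lemma \ref{lem2}(iii)) does agree with the paper, and your derivation of $Fix\,(T)\neq\emptyset$ is fine, but the two gaps above must be filled for the proof to stand.
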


\begin{proof}
Using the fact that $T$ is $(b,k)$-enriched strictly pseudocontractive and denoting $b=\dfrac{1}{\lambda}-1$, it follows that
 $\lambda\in (0,1]$ and thus from \eqref{eq3} we obtain
\begin{equation} \label{eq5a}
\|(1-\lambda)(x-y)+\lambda Tx-\lambda Ty\|^2\leq  \|x-y\|^2+k\|\lambda x-\lambda y-\lambda(Tx-Ty)\|^2,
\end{equation}
 for all $x,y \in C$. 
 
 Denote $T_\lambda x=(1-\lambda) x+\lambda T x$, where $I$ is the identity map. Then inequality \eqref{eq5a} expresses the fact that 
\begin{equation} \label{eq7-1a}
\|T_\lambda x-T_\lambda y\|^2\leq  \|x-y\|^2+k\|x-y-(T_\lambda x-T_\lambda y)\|^2,\forall x,y \in C,
\end{equation}
i.e., the averaged operator $T_\lambda$ is $k$-strictly pseudocontractive. By Theorem \ref{th1}, $Fix\,(T_\lambda)=Fix\,(T)\neq \emptyset$. Let $p\in Fix\,(T_\lambda)$.

By Lemma \ref{lem2} (ii), we have
$$
\|x_{n+1}-p\|^2=\|\alpha_n(x_n-p)+(1-\alpha_n)(T_\lambda x_n-p)\|^2=\alpha_n \|x_n-p\|^2
$$
$$
+(1-\alpha_n)\|T_\lambda x_n-p\|^2-\alpha_n(1-\alpha_n)\|x_n-T_\lambda x_n\|^2\leq \alpha_n \|x_n-p\|^2
$$
$$
+(1-\alpha_n)(\|T_\lambda x_n-p\|^2+k\|x_n-T_\lambda x_n\|^2)-\alpha_n(1-\alpha_n)\|x_n-T_\lambda x_n\|^2
$$
\begin{equation} \label{eq7-1b}
=\|x_n-p\|^2-(\alpha_n-k)(1-\alpha_n)\|x_n-T_\lambda x_n\|^2. 
\end{equation}
Based on the properties of $\{\alpha_n\}$, we get 
$$
\|x_{n+1}-p\|\leq \|x_{n}-p\|
$$
which shows that the sequence $\{\|x_{n}-p\|\}$ is decreasing. 

Hence $\lim\limits_{n\rightarrow \infty} \|x_{n}-p\|$ exists. By \eqref{eq7-1b} we also get
\begin{equation} \label{eq7-1c}
\sum_{n=0}^{\infty} (\alpha_n-k)(1-\alpha_n)\|x_n-T_\lambda x_n\|^2\leq \|x_0-p\|^2<\infty.
\end{equation}
Using condition \eqref{eq7-1a},  by \eqref{eq7-1c} we conclude that
\begin{equation} \label{eq7-1d}
\liminf_{n\rightarrow \infty} \|x_n-T_{\lambda} x_n\|=0.
\end{equation}
We now prove that actually $
\liminf\limits_{n\rightarrow \infty} \|x_n-T_{\lambda} x_n\|
$ exists. To this end, we prove that $\{\|x_n-T_{\lambda} x_{n}\|\}$ is decreasing. Indeed, since $x_n-x_{n+1}=(1-\alpha_n)(x_n-T_{\lambda} x_n)$, we have
$$
\|x_n-T_\lambda x_n\|^2=\|\alpha_n(x_n-T_{\lambda} x_{n+1})+(1-\alpha_n)(T_\lambda x_n-T_\lambda x_{n+1})\|^2
$$
$$
=\alpha_n \|x_n-T_{\lambda} x_{n+1}\|^2+(1-\alpha_n)\|T_\lambda x_n-T_\lambda x_{n+1})\|^2-\alpha_n (1-\alpha_n)\|x_n-T_{\lambda} x_{n}\|^2
$$
$$
\leq \alpha_n \|(x_n-x_{n+1})+(x_{n+1}-T_{\lambda} x_{n+1})\|^2-\alpha_n (1-\alpha_n)\|x_n-T_{\lambda} x_{n}\|^2
$$
$$
+(1-\alpha_n)\left[ \|x_n-x_{n+1}\|^2+k\|(x_n-T_{\lambda} x_{n})-(x_{n+1}-T_{\lambda} x_{n+1})\|^2\right]
$$
$$
=\alpha_n (\|x_n-x_{n+1}\|^2+\|x_{n+1}-T_{\lambda} x_{n+1}\|^2)+2\langle x_n-x_{n+1},x_{n+1}-T_{\lambda} x_{n+1}\rangle
$$
$$
-\alpha_n (1-\alpha_n)\|x_n-T_{\lambda} x_{n}\|^2+(1-\alpha_n)\left[\|x_n-x_{n+1}\|^2+k\|x_n-T_{\lambda} x_{n}\|^2\right.
$$
$$
\left.+k(\|x_{n+1}-T_{\lambda} x_{n+1}\|^2-2\langle x_n-x_{n+1},x_{n+1}-T_{\lambda} x_{n+1}\rangle)\right]
$$
$$
=(1-\alpha_n)^2 \|x_n-T_{\lambda} x_{n}\|^2+\alpha_n \|x_{n+1}-T_{\lambda} x_{n+1}\|^2
$$
$$
+2\alpha_n(1-\alpha_n)\langle x_n-T_{\lambda}x_{n},x_{n+1}-T_{\lambda} x_{n+1}\rangle-\alpha_n (1-\alpha_n)\|x_n-T_{\lambda} x_{n}\|^2
$$
$$
+k(1-\alpha_n)(\|x_n-T_{\lambda} x_{n}\|^2+\|x_{n+1}-T_{\lambda} x_{n+1}\|^2-2\langle x_n-T_{\lambda} x_{n},x_{n+1}-T_{\lambda} x_{n+1}\rangle)
$$
$$
=\left[\alpha_n+k(1-\alpha_n)\right]\|x_{n+1}-T_{\lambda} x_{n+1}\|^2+(1-\alpha_n)(1+k-2\alpha_n)\|x_n-T_{\lambda} x_{n}\|^2
$$
$$
+2(\alpha_n-k)(1-\alpha_n)\langle x_n-T_{\lambda} x_{n},x_{n+1}-T_{\lambda} x_{n+1}\rangle
$$
$$
\leq \left[\alpha_n+k(1-\alpha_n)\right]\|x_{n+1}-T_{\lambda} x_{n+1}\|^2+(1-\alpha_n)(1+k-2\alpha_n)\|x_n-T_{\lambda} x_{n}\|^2
$$
$$
+2(\alpha_n-k)(1-\alpha_n)\|x_n-T_{\lambda} x_{n}\|\|x_{n+1}-T_{\lambda} x_{n+1}\|.
$$
Denote $\beta_n=\|x_n-T_{\lambda} x_{n}\|$, for each $n$ and we get
$$
(1-\alpha_n)(1-k)\beta_{n+1}^2\leq (1-\alpha_n)(1+k-2\alpha_n)\beta_n^2+2(1-\alpha_n)(\alpha_n-k)\beta_n\beta_{n+1}.
$$
We may assume $\beta_n>0$. Since $1-\alpha_n>0$, by denoting $\delta_n=\dfrac{\beta_{n+1}}{\beta_{n}}$, by the previous inequality we obtain the following quadratic inequality
$$
(1-k)\delta_n^2-2(\alpha_n-k)\delta_n-(1+k-2\alpha_n)\leq 0
$$
\begin{equation} \label{f1}
\Longleftrightarrow (\delta_n-1)[(1-k)\delta_n+1+k-2\alpha_n]\leq 0.
\end{equation}
Now, since, by hypothesis, 
$$(1-k)\delta_n+1+k-2\alpha_n=(1-k)\delta_n+1-\alpha_n+k-\alpha_n>0,$$
from \eqref{f1} we obtain $\delta_n\leq 1$, that is, $\beta_{n+1}\leq \beta_n$, which shows that, indeed, $
\lim\limits_{n\rightarrow \infty} \|x_n-T_{\lambda} x_n\|
$
exists. By \eqref{eq7-1d}, this means that
\begin{equation} \label{eq7-1e}
\lim_{n\rightarrow \infty} \|x_n-T_{\lambda} x_n\|=0.
\end{equation}
Denote, as usually, the weak $\omega$-limit of a sequence $\{x_n\}$, $\{\overline{x}: \exists x_{n_j}\rightharpoonup \overline{x}\}$, by $\omega_w(x_n)$. 

Since $T_{\lambda}$ is strictly pseudo-contractive, by Lemma \ref{lem3}  we deduce that $I-T_{\lambda}$ is demiclosed (at $0$), that is,  $\omega_w(x_n)\subset Fix\,(T_{\lambda})$. 

To prove that $\{x_n\}$ is actually weakly convergent, we take $p,q\in \omega_w(x_n)$ and let $\{x_{n_i}\}$ and $\{x_{m_j}\}$ be subsequences of $\{x_n\}$ such that $x_{n_i}\rightharpoonup p$ and $x_{m_j}\rightharpoonup q$, respectively.

Since $\lim\limits_{n\rightarrow \infty} \|x_n-z\|$ exists for every $z \in Fix\,(T_{\lambda})$ and since $p,q\in Fix\,(T_{\lambda})$, by Lemma \ref{lem2} (iii), we obtain
$$
\lim_{n\rightarrow \infty} \|x_n-p\|^2=\lim_{j\rightarrow \infty} \|x_{m_j}-p\|^2=\lim_{j\rightarrow \infty} \|x_{m_j}-q\|^2+\|q-p\|^2
$$
$$
=\lim_{i\rightarrow \infty} \|x_{n_i}-q\|^2+\|q-p\|^2=\lim_{i\rightarrow \infty} \|x_{n_i}-p\|^2+2\|q-p\|^2
$$
$$
=\lim_{n\rightarrow \infty} \|x_n-p\|^2+2\|q-p\|^2.
$$
Hence $p=q$ and, since $Fix\,(T_\lambda)=Fix\,(T)$, the conclusion of the theorem follows. 
\end{proof}

\begin{remark}
1) If $T$ is enriched $(0,k)$-strictly pseudocontractive, that is, $T$ is $k$-strictly pseudocontractive, then by Theorem \ref{th5} we obtain Theorem 3.1 in \cite{Mar}.

2) Our Theorem \ref{th5} extends the corresponding results in \cite{BroP67}, by considering  Krasnoselskij-Mann iteration instead of the simple Krasnoselskij iteration.

3) If $b=0$ and $k=0$ in \eqref{eq3}, then $T$ is nonexpansive and our Theorem \ref{th5} reduces to Reich's theorem \cite{Reich} in the Hilbert space setting. It is known, see \cite{Reich}, that if $T$ is nonexpansive and the control sequence $\{\alpha_n\}$ satisfies \eqref{eq7-1u}, then the sequence $\{x_n\}$ generated by Mann's algorithm
$$
x_{n+1}=(1-\alpha_n) x_n+\alpha_n  T x_n,\,n\geq 0,
$$
converges weakly to a fixed point of $T$ in  a uniformly convex Banach space with a Fr\' echet differentiable norm. 

It is therefore an open problem whether Reich's theorem can be extended to $k$-strict pseudo-contractions, see \cite{Mar}, or enriched $(b,k)$-strict pseudo-contractions.
\end{remark}

\section{Conclusions}

In this paper  we introduced and studied the class of {\it enriched strictly pseudocontractive mappings}  in the setting of a Hilbert space $H$. We have shown that any enriched nonexpansive mapping defined on a bounded, closed and convex subset $C$ of $H$ has  fixed points in $C$ and that in order to approximate a fixed point of an enriched nonexpansive mapping,  we can use the Krasnoselskij iteration, for which we have proven a strong convergence result (Theorem \ref{th1}) as well as a weak convergence theorem (Theorem \ref{th4}). We also established a weak convergence theorem for a Krasnohselskij-Mann iteration (Theorem \ref{th5}) that extends an important result due to Marino and Xu \cite{Mar} from strictly pseudocontractive mappings to enriched strictly pseudocontractive mappings.

Our results extend some convergence theorems in \cite{BroP67} from strictly pseudocontractive mappings to enriched strictly pseudocontractive mappings and thus include many other important related results from literature as particular cases, see \cite{Ber02}, \cite{Ber02a}, \cite{Ber07}, \cite{BroP66}, \cite{Chi09}, \cite{Kra55}, \cite{Mar}, \cite{Pet}, \cite{Schu91}  etc. For other related developments, see \cite{Ber19a}-\cite{Ber19e}, \cite{BerP19}, \cite{Pac19}, \cite{Pac19a}. 

We illustrated the richness of the new class of mappings  by means of Example \ref{ex1}, which shows that, alongside all nonexpansive mappings, strictly pseudocontractive mappings  are also included in the class of {\it enriched strictly pseudocontractive mappings}. 

Note also that, similarly to the case of nonexpansive mappings, any enriched strictly pseudocontractive mapping is continuous.

\vskip 0.5 cm {\it  Department of Mathematics and Computer Science

North University Center at Baia Mare

Technical University of Cluj-Napoca 

Victoriei 76, 430122 Baia Mare ROMANIA

E-mail: vberinde@cunbm.utcluj.ro}

\vskip 0.5 cm {\it Academy of Romanian Scientists  (www.aosr.ro)

E-mail: vasile.berinde@gmail.com}

\end{document}